\tikzstyle{block} = [draw,rectangle,thick,minimum height=2em,minimum width=2em]
\tikzstyle{connector} = [->,thick]
\tikzstyle{line} = [thick]
\tikzstyle{branch} = [circle,inner sep=0pt,minimum size=1mm,fill=black,draw=black]
      \theoremstyle{plain}
      \newtheorem{assumption}{Assumption}
      \theoremstyle{plain}
\newtheorem{theorem}{Theorem}
\newtheorem{lemma}{Lemma}
\theoremstyle{definition}
\newcommand{\ba}{\begin{align}}
\newcommand{\ea}{\end{align}}
\newcommand{\fr}{\frac}
\newcommand{\lam}{\lambda}
\newcommand{\R}{{\mathbb R}}
\title{\LARGE \bf Neuron Growth Output-Feedback Control by PDE Backstepping
}
\author{Cenk Demir$^{1}$ Shumon Koga$^{2}$ and Miroslav Krstic$^{1}$% <-this % stops a space
\thanks{$^{1}$Cenk Demir and Miroslav Krstic are with the
Department of Mechanical and Aerospace Engineering, U.C. San Diego,
9500 Gilman Drive, La Jolla, CA, 92093-0411, 
        {\tt\small cdemir@ucsd.edu} and
{\tt\small krstic@ucsd.edu}}%
\thanks{$^{2}$Shumon Koga is with the Department of Electrical and Computer Engineering, U.C. San Diego,
9500 Gilman Drive, La Jolla, CA, 92093-0411,
        {\tt\small skoga@ucsd.edu}}%
}
\begin{document}

\maketitle
\thispagestyle{empty}
\pagestyle{empty}

%%%%%%%%%%%%%%%%%%%%%%%%%%%%%%%%%%%%%%%%%%%%%%%%%%%%%%%%%%%%%%%%%%%%%%%%%%%%%%%%
\begin{abstract}

Neurological injuries predominantly result in loss of functioning of neurons. These neurons may regain function after particular medical therapeutics, such as Chondroitinase ABC (ChABC), that promote axon elongation by manipulating the extracellular matrix, the network of extracellular macromolecules, and minerals that control the tubulin protein concentration, which is fundamental to axon elongation. We introduce an observer for the concentration of unmeasured tubulin along the axon, as well as in the growth cone, using the measurement of the axon length and the tubulin flux at the growth cone. We employ this observer in a boundary control law which actuates the tubulin concentration at the soma (nucleus), i.e., at the end of the axon distal from the measurement location. For this PDE system with a moving boundary, coupled with a two-state ODE system, we establish global exponential convergence of the observer and local exponential stabilization of the [axon, observer] system in the spatial $\mathcal{H}_1$-norm. The results require that the axon growth speed be bounded. For an open-loop observer, this is ensured by assumption (which requires that tubulin influx at the soma be limited), whereas for the output-feedback system the growth rate of the axon is ensured by assuming that the initial conditions of all the states, including the axon length, be sufficiently close to their setpoint values. 

%We introduce  observer  and output-feedback control for axon elongation by regulating the tubulin concentration at the neuron's soma (nucleus). Tubulin dynamics in the axon and the axon length are modeled as coupled parabolic diffusion-reaction-advection Partial Differential Equations (PDE) with a boundary governed by Ordinary Differential Equations (ODE). A novel backstepping method for observer design and output-feedback control for this coupled PDE-ODE with moving boundary problem is proposed under the measured outputs of the axon's length and tubulin concentration change. After transforming the observer error system to a target error system, the method of successive approximation is applied to ensure the well-posedness of the solution to the conditions for gain kernel function, which can be solved numerically. By applying Lyapunov analysis to the target system in the spatial $\mathcal{H}_1$-norm, we prove that the observer error system is globally exponentially stable, and the closed-loop system with the proposed output-feedback controlled law is locally exponentially stable.

\end{abstract}

%%%%%%%%%%%%%%%%%%%%%%%%%%%%%%%%%%%%%%%%%%%%%%%%%%%%%%%%%%%%%%%%%%%%%%%%%%%%%%%%
\section{INTRODUCTION}
Neuroscience is a complex interdisciplinary research field  aimed at enhancing understanding of how neurons perceive information and curing nervous system-related disorders and damage \cite{izhikevich2007dynamical,  squire2012fundamental,ribar2020neuromorphic}. These disorders and impairments, such as Parkinson's disease, Alzheimer's disease, Huntington's disease, and spinal cord injuries, may occur because of the degeneration of neurons  \cite{maccioni2001molecular,dauer2003parkinson,liu1997neuronal}. These medical disorders regularly result from the loss of functionality of neurons, such as the cessation of elongation. Treatments such as ChABC can heal neurons and resume their activities\cite{karimi2010synergistic, bradbury2011manipulating}. The basic idea of this therapy is to manipulate the extracellular matrix (ECM) to regulate the activity of neurons as desired \cite{frantz2010extracellular}. 

Neurons are highly active cells whose mission is to receive and transmit electrical signals which contain perceptual information. This process begins with a signal entry from the dendrites of a postsynaptic neuron. It continues with the transportation of the signal through the axon by using the energy produced in the soma. During transmission, the growth cone, located at the end of the axon, seeks the chemical cues to detect the neuron that will receive the signal \cite{julien1999neurofilament}. After detecting the direction, tubulin dimers and monomers assemble to create microtubules which extend the axon with the assistance of ECM \cite{ barros2011extracellular}. Thus, tubulin concentration dynamics in the axon are the primary regulator of axon elongation. These dynamics express the following behaviors: the tubulin production in the soma, assembly and disassembly processes of microtubules in the axon, and the transportation process along the axon and in the growth cone \cite{DIEHL2014194}. Recent preclinical studies show that manipulation of ECM controls the axon growth, by which the tubulin concentration is controlled \cite{burnside2014manipulating}.
 
Researchers proposed different mathematical models to explain the axon growth process by considering most or some of the behavior of tubulin. One of the pioneer mathematical models describes the polymerization of tubulin  \cite{buxbaum1988thermodynamic}. In another model developed in \cite{van1994neuritic}, the authors propose tubulin transportation as diffusion and include the axon growth due to the polymerization process. A PDE model of axon growth is presented in \cite{mclean2004continuum}, and its stability properties are defined in \cite{mclean2006stability}. Another pioneer axon growth model introduces a coupled PDE-ODE with a moving boundary and gives numerical results for tubulin concentration along the axon \cite{diehl2014one}.

Besides computational purposes, the coupled PDE-ODE axon growth model has also begun to be studied from a control-theoretical perspective to stabilize the axon growth \cite{demir2021neuron}. Recent enhancements related to the boundary control of PDEs have motivated researchers in different fields \cite{krstic2008boundary}. The contribution of \cite{smyshlyaev2004closed} has introduced the method of successive approximation to obtain solutions to kernel PDEs for Volterra type of transformation for parabolic PDEs. These groundbreaking studies have extended the type of systems accompanied with boundary control to the class of coupled PDE-ODE systems \cite{susto2010control, tang2011state}. While almost all of the studies dealt with a constant domain size in time, the authors of \cite{koga2018control, krstic2020materials} have developed a backstepping design for a parabolic PDE with a moving boundary, called the Stefan problem. The authors have proven the global exponential stability of the closed-loop system with the proposed input by means of both  state-feedback and output-feedback control. In addition, recent research considers nonlinear PDE systems and proves the stability results in a local sense \cite{coron2013local,buisson2018control}.  While those results about local stabilization for nonlinear PDE systems have been achieved for hyperbolic PDEs, an output-feedback stabilization for the coupled nonlinear parabolic PDE-ODE with a moving boundary, which is a class of the system modeling axon growth dynamics proposed in literature, has not been studied so far. 

This paper present the output-feedback stabilization for the tubulin concentration model associated with the axon growth dynamics, described by a parabolic PDE with moving boundary governed by a nonlinear ODE. First, we obtain the linearized reference error system by considering the error variable from the steady-state solution of the system in the plant dynamics for a given desired axon length, and applying the linearization to ODE state in order to deal with algebraic nonlinearity. By setting the measured output of the system as the axon length and the change of the tubulin concentration in growth cone, we design an observer to estimate the plant state with an observer gain which is given by the kernel functions via the backstepping technique. Since these kernels are not analytically solvable, we apply the method of successive approximation to guarantee the well-posedness of the solution. We rigorously prove the global stability for the observer error system, and the local stability result for the closed-loop system with the output-feedback control following the similar procedure to \cite{demir2021neuron}. The numerical simulation is performed to investigate the performance of the proposed observer and output-feedback control designs, which illustrates the desired performance in both regulation of the axon growth and the estimation of the tubulin concentration. 

The paper is structured as follows. Section \ref{sec:section2} introduces the coupled PDE-ODE system with a moving boundary modeling the tubulin concentration and the axon growth. Section \ref{sec:observer} presents the observer design via backstepping method, the method of successive approximation, and the stability analysis of the estimation error system. Section \ref{sec:section4} proposes the observer-based output-feedback control and the stability proof of closed-loop system. Section \ref{sec:section5} provides the simulation result for the closed-loop system of the axon growth model using the verified physical parameters with the proposed observer and output-feedback control. The paper ends with the conclusion in Section \ref{sec:section6}.

\begin{figure}[t]
\centering
       \includegraphics[width=0.45\linewidth]{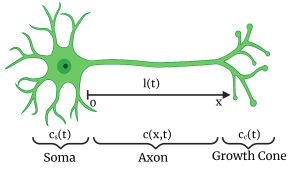}
  \caption{Schematic of neuron and state variables}
  \label{fig:1b} 
\end{figure}

\section{MODELING OF AXON GROWTH}
\label{sec:section2}

This section presents the mathematical model of the axon elongation process governed by a coupled PDE-ODE with a moving boundary. In this model, the concentration of tubulin protein along the axon directly controls the growth of a newborn axon. There are two underlying assumptions for modeling the axonal growth given here. The first assumption is that the tubulin protein is entirely responsible for the growth of an axon, not any other substance. The second assumption is that free tubulin molecules are modeled as homogeneous continuums because of their small size. 

Let $l(t)$ denote the length of axon, $x$ denote the one-dimensional coordinate along the axon, and $c(x,t)$ denote the tubulin concentration along this one-dimensional coordinate. The variables with subscripts $c$, and $s$ describe the ones of growth cone and soma, namely, $c_{\rm c}(t)$ and $c_{\rm s}(t)$ are the tubulin concentration in the growth cone and soma, respectively, as shown in Fig. \ref{fig:1b}. While free tubulin proteins move with the constant velocity $a$, and diffuse with the diffusivity constant $D$, the degradation along the axon occurs with the constant rate $g$. The constant $l_{\rm c}$ is the growth ratio of growth cone, $\tilde{r}_{\rm g}$ is the chemical reaction rate of free tubulin monomers and dimers to create microtubules, which is shown in Fig. \ref{fig:1a}. Taking into account all of these effects, the dynamics of the tubulin concentration associated with the dynamics of the axon length are described as
\begin{align}\label{sys1} 
c_t (x,t) =& D c_{xx} (x,t) - a c_x (x,t) - g c(x,t) , \\
\label{sys2} c_x(0,t) = & - q_{\rm s}(t), \\
\label{sys3} c(l(t),t) =& c_{\rm c} (t), \\
\label{sys4} l_{\rm c} \dot{c}_{\rm c}(t) = & (a-gl_{\rm c}) c_{\rm c}(t) - D c_x (l(t), t) \notag\\
& - (r_{\rm g} c_{\rm c}(t) + \tilde{r}_{\rm g} l_{\rm c} )(c_{\rm c}(t) - c_{\infty}), \\
\label{sys5} \dot{l}(t) =& r_{\rm g} (c_c(t)-c_{\infty}), 
\end{align}
where $r_{g}$ is a lumped parameter introduced in \cite{DIEHL2014194}, and $c_{\infty}$ is a equilibrium concentration in the cone to stop the axonal growth. 

\begin{figure}[t]
\centering
\includegraphics[width=0.67\linewidth]{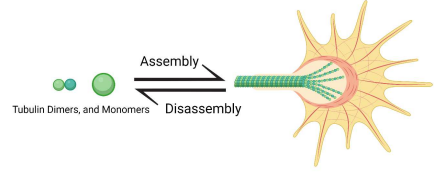}
       \caption{Tubulin assembly-disassembly}
       \label{fig:1a}
\end{figure}

Let $c_{\rm eq}(x)$ be the tubulin concentration profile for a given constant axon length $l_{\rm s}$, the solution of which is given in \cite{diehl2014one}. 
%In addition, the substitution of steady-state solution into \eqref{sys1}-\eqref{sys5} and linearization procedure around zero states are detailed in \cite{krstic2020materials}.
Let $u(x,t)$, $z_1(t)$, $z_2(t)$, and $U(t)$ be the reference error states and input defined below: 
\begin{align}
\label{eqn:sys-trans1}
u(x,t) =& c(x,t) - c_{\rm eq}(x), \\
z_{1}(t) =& c_{\rm c}(t) - c_{\infty}, \\
z_2(t) =& l(t) - l_{\rm s}, \\
U(t) = & - ( q_{\rm s}(t) - q_{\rm s}^*). 
\label{eqn:sys-trans4}
\end{align}
By subtracting the steady-state solution from  \eqref{sys1}-\eqref{sys5}, and using \eqref{eqn:sys-trans1}-\eqref{eqn:sys-trans4}, we obtain the reference error system as in \cite{demir2021neuron}. The reference error system has algebraic nonlinearity in ODE of the dynamics, so the linearization technique mentioned above is applied to deal with the nonlinearity. Then, we obtain the following dynamics (see Section 12-2 in \cite{krstic2020materials} for the details):
 \begin{align} \label{ulin-PDE}
u_t (x,t) =& D u_{xx} (x,t) - a u_x (x,t) - g u(x,t) , \\
u_x(0,t) = & U(t), \label{ulin-BC1} \\
\label{linreferr3}u(l(t),t) =&H^T X(t)  , \\
\dot{X}(t) = & A X(t) + Bu_x (l(t), t), \label{ulin-ODE}
 \end{align}
where $ X(t)=[ z_1(t) \quad z_2(t)]^\top$ and
\begin{align} \label{AB-def} 
 A = &\left[ 
 \begin{array}{cc}
 \tilde a & 0 \\
 r_{\rm g} & 0
 \end{array}  
 \right] , \quad B =  \left[ 
 \begin{array}{c}
 - \beta \\
 0
 \end{array}  
 \right], \\
H = &\left[1 \quad - \frac{(a-gl_{\rm c}) c_{\infty}}{D}\right]^T  .  \label{C-def} 
\end{align}

\textbf{Control Design Task:} Develop an observer-based output-feedback control law for the input $q_{\rm s}(t)$ so that $l(t)$ converges to a desired (setpoint) axon length $l_{\rm s}>0$, at least starting from $c(x,0)$ sufficiently near $c_{\rm eq} (x)$ (in a suitable norm in $x$) and $l(0)$ sufficiently near $l_{\rm s}$. 
%, subject to the governing equations \eqref{sys1}--\eqref{sys5}. 

\section{STATE ESTIMATION DESIGN}
\label{sec:observer}
In this section, first we state the assumptions on the axon length. Then, we propose the first major theorem in this paper with providing its proof, the exponential stability of the observer error system. 
\begin{assumption}
The axon length $l(t)$ maintains positive and is upper bounded, i.e., there exists a positive constant $\bar l>0$ such that the following inequality holds: 
\begin{align} 
    \label{ineq-l} 
     0 < l(t) \leq \bar l, 
\end{align}
for all $t \geq 0$. 
\label{asm:assump1}
\end{assumption}
\begin{assumption}
The time derivative of the axon length is also bounded, i.e., there exists a positive constant $\bar v>0$ such that the following inequality holds: 
\begin{align} 
    \big|\dot l(t) \big| \leq \bar v, \label{ineq-ldot}  
\end{align}
for all $t\geq 0$.
\label{asm:assump2}
\end{assumption}
Before we state our major theorem, we define the $\mathcal{H}_1$-norm as $||f(.,t)||_{H_1}=\sqrt{\left(\int_0^{l(t)}f^2(.,t)+f_x^2(.,t)dx\right)}$.
\begin{theorem}
Let Assumptions \ref{asm:assump1} and \ref{asm:assump2} hold. Consider the plant \eqref{ulin-PDE}-\eqref{ulin-ODE} and the available measurements
%and the quantities assumed to be measured and treated as the system's outputs are 
\begin{align}
    y_1(t)=u_x(l(t),t), \quad 
    y_2(t)=C X(t) \label{measurements}
\end{align}
where %$C \in \mathbb{R}^2$, and
$C=\left[0 \quad 1\right]$. Also, consider the observer designed as
%Then, the observer is designed as
 \begin{align} 
 \label{obulin-PDE}
\hat{u}_t (x,t) =& D \hat{u}_{xx} (x,t) - a \hat{u}_x (x,t) - g \hat{u}(x,t)\nonumber \\ &+p_1(x,l(t))\left(u_x(l(t),t)-\hat{u}_x(l(t),t)\right) , \\
\hat{u}_x(0,t) = & U(t), \label{obulin-BC1} \\
\label{oblinreferr3}\hat{u}(l(t),t) =&H^T \hat{X}(t)
, \\
\dot{\hat{X}}(t) = & A \hat{X}(t) + B u_x (l(t), t)+LC(X(t)-\hat{X}(t)) 
\label{obulin-ODE} ,
 \end{align}
where $x\in[0,l(t)]$, $L$ is chosen to make $A-LC$ Hurwitz, and the observer gain $p_1(x,l(t))=DP(x,l(t))$ where $P(x,l(t))$ is the solution to the following PDE
\begin{align}
\label{KernelPDE1}
 DP_{yy}(x,y)-DP_{xx}(x,y)+aP_x&(x,y)\nonumber \\
 -aP_y(x,y)&=\lambda P(x,y),   \\
 P(x,x)&=\left(\frac{\lambda}{2D}x+\gamma_1\right),  \label{kernelPDE2}\\
 P_x(0,y)&=0 , 
 \label{kernelPDE3}
\end{align}
where $\lambda>0$ is an arbitrary constant, and $\gamma_1 $ is a constant satisfying $\frac{D}{a}\leq\gamma_1$.
Then, the observer error system is exponentially stable in $\mathcal{H}_1$-norm, i.e., there exist positive constants $M>0$ and $\kappa>0$ such that the following norm estimate holds: 
\begin{align}
    \tilde \Phi(t) \leq M \tilde \Phi(0) e^{- \kappa t}, 
\end{align}
where $\tilde \Phi(t):= || u - \hat u||_{H_1} + |X - \hat X|$. 
\label{thm:teorem1}
\end{theorem}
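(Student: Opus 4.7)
The plan is to use a backstepping observer transformation to map the estimation error system into a target system with built-in damping, and then close the argument via a Lyapunov functional in the $\mathcal{H}_1$-norm. First I would subtract the observer \eqref{obulin-PDE}--\eqref{obulin-ODE} from the plant \eqref{ulin-PDE}--\eqref{ulin-ODE} to obtain the error system for $\tilde u = u-\hat u$ and $\tilde X = X - \hat X$:
\begin{align*}
\tilde u_t =& D\tilde u_{xx} - a\tilde u_x - g\tilde u - p_1(x,l(t))\tilde u_x(l(t),t),\\
\tilde u_x(0,t) =& 0, \quad \tilde u(l(t),t) = H^T\tilde X(t),\\
\dot{\tilde X} =& (A-LC)\tilde X + B\tilde u_x(l(t),t),
\end{align*}
where the measurement injection enters only through the boundary value $\tilde u_x(l(t),t)$.

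Next I would introduce the Volterra-type backstepping transformation
\[
\tilde u(x,t) = \tilde w(x,t) - \int_x^{l(t)} P(x,y)\,\tilde w(y,t)\,dy,
\]
and, using integration by parts together with $P(x,x)$, $P_x(0,y)$, impose that $\tilde w$ satisfy the clean target system $\tilde w_t = D\tilde w_{xx} - a\tilde w_x - (g+\lambda)\tilde w$ with $\tilde w_x(0,t)=0$ and $\tilde w(l(t),t) = H^T\tilde X(t)$. Matching the derivatives of the transformation against the error PDE produces exactly the kernel system \eqref{KernelPDE1}--\eqref{kernelPDE3}, and identifies the gain as $p_1(x,l(t)) = DP(x,l(t))$. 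Well-posedness of this kernel PDE on the triangular domain $\{0\le x\le y\le \bar l\}$ (with $\bar l$ from Assumption \ref{asm:assump1}) would follow by the method of successive approximation along the characteristic coordinates, yielding a continuous kernel $P$ with uniform bounds on $P$, $P_x$, $P_y$. The inverse transformation $\tilde w(x,t) = \tilde u(x,t) + \int_x^{l(t)} Q(x,y)\,\tilde u(y,t)\,dy$ would be constructed similarly, giving an equivalence $\|\tilde u\|_{H_1} \sim \|\tilde w\|_{H_1}$ uniformly on $[0,\bar l]$.

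Having reduced the problem to the target system, I would analyze the Lyapunov functional
\[
V(t) = \tfrac{a_1}{2}\!\int_0^{l(t)}\!\tilde w^2\,dx + \tfrac{a_2}{2}\!\int_0^{l(t)}\!\tilde w_x^2\,dx + a_3\,\tilde X^T Q_0 \tilde X,
\]
where $Q_0\succ 0$ solves $(A-LC)^T Q_0 + Q_0(A-LC) = -I$. Differentiating via Leibniz's rule produces (i) interior dissipation $-D\|\tilde w_x\|^2$ and $-(g+\lambda)\|\tilde w\|^2$ from the parabolic terms, (ii) $-|\tilde X|^2$ from the ODE, and (iii) boundary terms at $x=l(t)$ involving $\dot l(t)\tilde w^2(l,t)$, $\dot l(t)\tilde w_x^2(l,t)$, and $\tilde w_x(l,t)H^T\tilde X$. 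The interior term $\tilde w_x(l,t)\tilde w(l,t) = \tilde w_x(l,t)H^T\tilde X$ arising from $\int \tilde w \tilde w_t$ is absorbed into the ODE contribution using Young's inequality, and the boundary term $\tilde w^2(l,t)=|H^T\tilde X|^2$ is controlled by $|\tilde X|^2$.

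The main obstacle will be the term $\dot l(t)\tilde w_x^2(l(t),t)$ in the $\|\tilde w_x\|^2$ derivative, which has no direct sign and cannot be reduced to $|\tilde X|^2$. Here I would use Agmon's inequality $\tilde w_x^2(l,t) \le C_1\|\tilde w_x\|_{L^2}\|\tilde w_{xx}\|_{L^2} + C_2\|\tilde w_x\|_{L^2}^2$ combined with an $\mathcal{H}_2$-level estimate, or alternatively a weighted Poincar\'e bound, to re-express this term as $\bar v \cdot(\text{small multiple of }\|\tilde w_{xx}\|^2 + \|\tilde w_x\|^2)$; the bound $|\dot l|\le \bar v$ from Assumption \ref{asm:assump2} is essential for this step. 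Choosing $\lambda$ large enough and the weights $a_1,a_2,a_3$ appropriately then yields $\dot V \le -2\kappa V$, and hence $V(t)\le V(0)e^{-2\kappa t}$. Invertibility of the backstepping transformation converts this into the claimed estimate $\tilde\Phi(t) \le M\tilde\Phi(0)e^{-\kappa t}$.
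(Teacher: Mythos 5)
Your overall strategy (backstepping transformation to a damped target system, then an $\mathcal{H}_1$ Lyapunov functional, plus a bound on $\dot{l}$ from Assumption~\ref{asm:assump2}) is the same as the paper's, but several concrete details of your sketch are wrong in ways that would break the argument. First, the error ODE is not $\dot{\tilde X} = (A-LC)\tilde X + B\tilde u_x(l(t),t)$. The observer ODE \eqref{obulin-ODE} injects the \emph{measured} flux $u_x(l(t),t)$, not the estimate $\hat u_x(l(t),t)$, so the $B u_x$ terms cancel exactly when you subtract; the error ODE is simply $\dot{\tilde X}=(A-LC)\tilde X$ with no forcing. This exact cancellation is a deliberate design choice in the observer and is what makes the ODE part of the cascade autonomous. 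Second, your claimed target boundary condition $\tilde w_x(0,t)=0$ is inconsistent with the kernel conditions \eqref{kernelPDE2}--\eqref{kernelPDE3}: evaluating the transformation at $x=0$ with $P(0,0)=\gamma_1$ and $P_x(0,y)=0$ gives a Robin condition $\tilde w_x(0,t)=\gamma_1\tilde w(0,t)$, not Neumann. The hypothesis $\gamma_1 \ge D/a$ in the theorem is used precisely to make the resulting boundary term $-(D\gamma_1-a/2)\tilde w(0,t)^2$ in $\dot{\tilde V}_{11}$ have the right sign, and the Lyapunov functional correspondingly needs an extra boundary term $\tfrac{\gamma_1}{2}\tilde w(0,t)^2$ that your functional $V$ omits (it is needed to cancel the $\tilde w_x(0,t)\tilde w_t(0,t)$ contribution from $\tfrac{d}{dt}\|\tilde w_x\|^2$). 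Third, your "clean" target PDE $\tilde w_t = D\tilde w_{xx}-a\tilde w_x-(g+\lambda)\tilde w$ is incomplete: because the upper limit of the Volterra integral is $l(t)$, differentiating the transformation in time produces a residual $\dot{l}(t)\bigl(Q(x,l(t))-P(x,l(t))\bigr)\tilde w(l(t),t)$ in the target PDE, not just Leibniz boundary terms in $\dot V$. That interior term must be estimated (it is handled with the $F(x,\tilde X)$ bounds $L_1,\dots,L_4$ in the paper's proof) and is a separate source of $\bar v$-dependence.

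Finally, a smaller point: the theorem allows $\lambda>0$ arbitrary, and the paper closes the Lyapunov inequality by restricting $\bar v\le\min\{D/(8\bar l),(g+\lambda)/(2\gamma_1)\}$ (Lemma~\ref{lem:lamma1}) and choosing $d_1,d_2$ large, rather than by choosing $\lambda$ large as you suggest. These are related knobs, but stating the argument as "choose $\lambda$ large enough" misrepresents where the smallness assumption actually sits (it is a hypothesis on $\dot{l}$, carried by Assumption~\ref{asm:assump2}). With these four corrections — exact cancellation in the ODE error, the Robin boundary condition and its boundary Lyapunov term, the interior $\dot l$-residual in the target PDE, and placing the smallness restriction on $\bar v$ rather than on $\lambda$ — your proposal aligns with the paper's proof.
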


Theorem \ref{thm:teorem1} is proved in the remainder of this section.

\subsection{Observer design and backstepping transformation}
\subsubsection{Observer design and observer error system}

Then, we define the observer error state as 
\begin{align}
    \tilde{u}(x,t)=u(x,t)-\hat{u}(x,t), \quad
    \tilde{X}(t)=X(t)-\hat{X}(t).
\end{align}
Subtracting the observer system \eqref{obulin-PDE}-\eqref{obulin-ODE} from the plant \eqref{ulin-PDE}-\eqref{ulin-ODE}, the observer error dynamics is obtained as 
\begin{align}
    \label{error-PDE} \tilde{u}_t(x,t) =& D \tilde{u}_{xx}(x,t) - a  \tilde{u}_x(x,t)  - g \tilde{u}(x,t)\nonumber \\
    &+p_1(x,l(t)) \tilde{u}_x(l(t),t), \\
\tilde{u}_x(0,t) = & 0,   \\
\tilde{u}(l(t),t)=& H^\top \tilde{X}(t), \\
\dot{\tilde{X}}(t)=& \left(A-LC\right)\tilde{X}(t).
\label{error-ODE} 
\end{align}

\begin{figure*}
 \centering  
\resizebox{.75\textwidth}{!}{
  \begin{tikzpicture}[scale=0.5]
  \centering
    \small
    % node placement with matrix library: 5x4 array
    {
      \node[block] at (-14,0) (F1) {$ \textbf{Controller} $};
      \node[block] at (-4,0) (f1)  {$\begin{matrix}\textbf{PDE} \\
            u_t (x,t) = D u_{xx} (x,t) - a u_x (x,t) - g u(x,t) , \\
u_x(0,t) =  U(t),  \\
u(l(t),t) =H^T X(t)  
          \end{matrix}$};
          \node[block] at (8,0) (f2)  {$\begin{matrix}\textbf{ODE} \\
\dot{X}(t) =  A X(t) + Bu_x (l(t), t) \end{matrix} 
          $};
          \node [] at (13,0) (u1) {}; 
      \node[block] at (1,-5) (o1) {$\begin{matrix}\textbf{Observer} \\
            \hat{u}_t (x,t) = D \hat{u}_{xx} (x,t) - a \hat{u}_x (x,t) - g \hat{u}(x,t)+p_1(x,l(t))\left(u_x(l(t),t)-\hat{u}_x(l(t),t)\right) , \\
\hat{u}_x(0,t) =  U(t),  \\
\hat{u}(l(t),t) =H^T \hat{X}(t), \\
\dot{\hat{X}}(t) =  A \hat{X}(t) + B u_x (l(t), t)+LC(X(t)-\hat{X}(t)) 
          \end{matrix}$};
      \node [] at (-13,-8) (b1) {};\&
      \node [] at (13,-8) (b2) {};\&
    };
    % now link the nodes
    \draw [connector] (f1.east) -- (f2);
    \draw [connector] (F1) -- node [above]{$U(t)$} (f1);
    \draw [line] (f2.south) |- ($(f1.south)-(0cm,0.5cm)$);
    \draw [connector] ($(f1.south)-(0cm,0.5cm)$) --  (f1.south);
    \draw [line] (f2) -| node[above right] {$\begin{matrix} \textbf{Measurements} \\ y_1(t)=u_x(l(t),t) \\
    y_2(t)=C X(t)\end{matrix}$}(u1);
    \draw [connector] ($(b2.east)+(1cm,3cm)$) --(o1.east) ;
    \draw [connector] ($(b1.west)-(1.3cm,0cm)$) -- node [left]{$l(t)$} ($(F1.south)-(0.5cm, 0cm)$); 
    \draw [connector] ($(o1.west)-(0cm,0.5cm)$) -| node[below,xshift=0.4cm]{$\hat{u}(x,t),\hat{X}(t)$}  ($(F1.south)+(0.8cm, 0cm)$);
    \draw [line] ($(b1.west)-(1.3cm,0cm)$) -- ($(b2.east)+(1cm,0cm)$);
    \draw [line,*-] ($(u1)+(1.25cm,0.2cm)$) -- ($(b2.east)+(1cm,0cm)$);
    \draw [connector] (u1.east) -- ($(u1.east)+(2cm, 0cm)$);
    \draw [connector,*->] ($(F1.east)+(0.5cm,0.2cm)$) |- ($(o1.west)+(0cm,0.5cm)$);
  \end{tikzpicture}
}
\caption{Block diagram of observer design and output-feedback system}
\label{blockdiag}
\end{figure*}
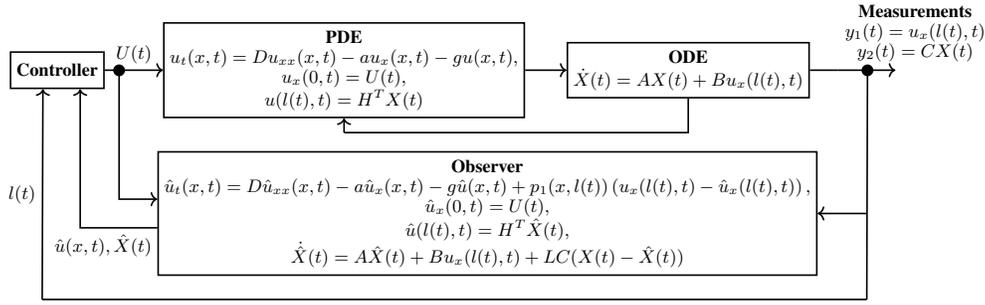

\subsubsection{Inverse backstepping transformation}
We consider the backstepping transformation in the inverse form as 
\begin{align}
    \tilde{u}(x,t)=\tilde{w}(x,t)+\int_x^{l(t)} P(x,y)\tilde{w}(y,t)dy,
    \label{error-bkst-inv}
\end{align}
where $P(x,y)\in \R$ is the gain kernel to be solved. Let the target system be 
\begin{align}
   \label{error-tar-PDE1} \tilde{w}_t(x,t)=&D\tilde{w}_{xx}(x,t)-a\tilde{w}_x(x,t)-(g+\lambda)\tilde{w}(x,t)\nonumber \\
   +\dot{l}(t)&\left(Q(x,l(t))-P(x,l(t))\right)\tilde{w}(l(t),t), \\
    \tilde{w}_x(0,t)=&\gamma_1\tilde{w}(0,t), \\
    \tilde{w}(l(t),t)=&H^\top \tilde{X}(t), \label{error-tar-PDE3} \\
    \dot{\tilde{X}}(t)=& \left(A-LC\right)\tilde{X}(t),
    \label{error-tar-ODE}
\end{align}
where $Q(x,y)\in \R$ is also the gain kernel obtained from direct backstepping transformation. Let the ODE observer gain $L$ be described as $L = \left[ 
 \begin{array}{cc}
 l_1 &
 l_2
 \end{array}  \right]^\top$. To make $A-LC$ Hurwitz, one can show the conditions for the gains as 
\begin{align}
    l_1>\frac{\tilde{a}l_2}{r_g}, \quad  l_2>\tilde{a}.
\end{align}
Taking the time and spatial derivatives of \eqref{error-bkst-inv} together with the solution of \eqref{error-tar-PDE1}-\eqref{error-tar-PDE3} and the stability of $\tilde{X}(t)$, we obtain \eqref{KernelPDE1}-\eqref{kernelPDE3}, and by choosing
    $P(x,y)=\tilde{P}(x,y)e^{\frac{a}{2D}(x+y)}$,
\eqref{KernelPDE1}-\eqref{kernelPDE3} become
\begin{align}
    \tilde{P}_{yy}(x,y)-\tilde{P}_{xx}(x,y)=&\frac{\lambda}{2D}\tilde{P}(x,y), \label{kernelPDE1-trans1}\\
    \tilde{P}(x,x)=&e^{-\frac{a x}{D}}\left(\frac{\lambda}{2D} x +\gamma_1\right),\\
    \tilde{P}_x(0,y)=&\frac{a}{2D}\tilde{P}(0,y),
    \label{kernelPDE3-trans1}
\end{align}
which cannot yield an analytical solution. To guarantee the well-posedness of the solution, we apply the method of successive approximation, by which a numerical solution can be obtained.

\subsection{Kernel PDE analysis by successive approximations}
The method of successive approximation is applied to prove that  \eqref{kernelPDE1-trans1}-\eqref{kernelPDE3-trans1} is well-posed, which leads to an integral equation for obtaining the solution. First, we apply the following change of spatial coordinate
$\bar x=y$, $\bar y=x$, and  $P^*(\bar x,\bar y)=\tilde{P}(x,y)
$. Then, we have
\begin{align} \label{kernelPDE1-trans2} 
    P_{\bar x \bar x}^*(\bar x,\bar y)-P_{\bar y \bar y}^*(\bar x,\bar y)=&\frac{\lambda}{2D}P^*(\bar x,\bar y),\\
    P^*(\bar x,\bar x)=& e^{-\frac{a\bar x}{D}}\left(\frac{\lambda}{2D}\bar x +\gamma_1\right),\\
    P_{\bar y}^*(\bar x,0)=&\frac{a}{2D}P^*(\bar x,0).\label{kernelPDE3-trans2} 
\end{align}
To convert gain kernel PDE to integral equation, we introduce
\begin{align} \label{coor-change} 
    \xi=\bar x+\bar y, \ \ \ \eta=\bar x-\bar y, \ \ \ P^*(\bar x, \bar y)=G(\xi,\eta),
\end{align}
where $(\xi,\eta) \in \mathcal{T}_1$ defined as $\mathcal{T}_1=\{\xi,\eta : 0<\xi<2l(t),\ 0<\eta <\text{min}(\xi,2l(t)-\xi)\}$. By \eqref{coor-change}, the conditions \eqref{kernelPDE1-trans2}--\eqref{kernelPDE1-trans2} are rewritten with respect to $G$ as 
\begin{align}
  G_{\xi \eta}(\xi,\eta)&=\frac{\lambda}{8D}G(\xi,\eta), \label{G-xi-eta}\\
  G(\xi,0)&=e^{-\frac{a}{2D}\xi}\left(\frac{\lambda}{4D}\xi+\gamma_1\right),\\
  G_{\xi}(\xi,\xi)-G_{\eta}(\xi,\xi)&=\frac{a}{2D}G(\xi,\xi). \label{G-xi-G-eta}
\end{align}
By applying the method of successive approximation, the solution to \eqref{G-xi-eta}-\eqref{G-xi-G-eta} is supposed to be in the form of 
$G(\xi,\eta)=G_0(\xi,\eta)+F[G](\xi,\eta)$,
where each component is
\begin{align}
    G_0&(\xi,\eta)=\frac{\lambda}{2D}e^{-\frac{a}{2D}\eta}\int_0^{\eta}f(\tau)d\tau+\frac{\lambda}{4D}\int_{\eta}^{\xi}e^{-\frac{a}{2D}\tau}f(\tau) d\tau  \label{G0-def} 
\\
    F[G]&(\xi,\eta)=\frac{\lambda}{4D}\int_0^{\eta}e^{\frac{a}{2D}(\tau-\eta)}\int_0^{\tau}G(\tau,s)ds d\tau\nonumber \\ &\quad \quad \quad    +\frac{\lambda}{8D}\int_{\eta}^{\xi}\int_0^{\eta}G(\tau,s)dsd\tau.
\end{align}
where $f(x)=\left(\left(1-\frac{a x}{2D}\right)-\frac{2a}{\lambda}\gamma_1\right)$. By using the definition of $G_0$, we let $G_{n+1}=F[G_n]$ and because of the positivity of $a, \ \gamma_1, \ D $, and $\lambda$, we get $\sup_{x\in [0,l[t]]} f(x)\leq 1$. 
Then, the bound of \eqref{G0-def} can be obtained as 
\begin{align}
    \left|G_0(\xi,\eta)\right|
    &\leq \frac{\lambda}{2}\left(\frac{1}{a}+ \frac{\bar l}{D} \right) =: M.
\end{align}
Then, one can obtain
\begin{align}
    \left|G_{n+1}(\xi,\eta)\right|
    &\leq M^{n+2}\frac{(\xi+\eta)^{n+1}}{(n+1)!}.
\end{align}
Thus, $G(\xi,\eta)=\sum_{n=0}^{\infty} G_n(\xi,\eta)$, which converges in $\mathcal{T}_1$ uniformly, and absolutely. In addition, it is continuous and twice differentiable. Thus, $G$ has a bound
\begin{align}
    |G(\xi,\eta)|\leq M e^{M(\xi+\eta)}.
\end{align}
By following the procedure described in \cite{smyshlyaev2004closed}, we get $G(\xi,\eta)$ is unique. Then, we can conclude the following result
\begin{align}
    |P(x,y)|\leq Me^{2Mx}, 
\end{align}
which guarantees the boundedness of the solution to the gain kernel PDE in \eqref{KernelPDE1}--\eqref{kernelPDE3}. 
\subsection{Direct backstepping transformation}
We use the following direct backstepping  transformation
\begin{align}
    \tilde{w}(x,t)=\tilde{u}(x,t)-\int_x^{l(t)} Q(x,y)\tilde{u}(y,t)dy.
    \label{error-bkst-fwd}
\end{align}
By applying \eqref{error-bkst-fwd} to the observer error system \eqref{error-PDE}--\eqref{error-ODE} and the target system \eqref{error-tar-PDE1}--\eqref{error-tar-ODE}, the conditions for the kernel function are obtained as 
\begin{align} \label{kernelPDE1-inv} 
    DQ_{xx}(x,y)-aQ_x(&x,y)-DQ_{yy}(x,y)\nonumber \\
    &-aQ_y(x,y)=\lambda Q(x,y), \\
    & \ \ \ \ \ \  Q(x,x)=-\frac{\lambda}{2D}x+\gamma_1, \\
    & \ \ \ \ \ Q_x(0,y)=0.\label{kernelPDE3-inv} 
\end{align}
To make sure the well-posedness of the solution to the kernel PDE above, we apply the following transformation $Q(x,y)=2e^{\frac{a}{2D}(x-y)}\tilde{Q}(x,y)$. Then, the conditions \eqref{kernelPDE1-inv}--\eqref{kernelPDE3-inv} can be rewritten as 
\begin{align}
    \tilde{Q}_{xx}(x,y)-\tilde{Q}_{yy}(x,y)=&\frac{\lambda}{D}\tilde{Q}(x,y), \\
    \tilde{Q}(x,x)=&-\frac{\lambda}{4D}x+\frac{\gamma_1}{2}, \\
    \tilde{Q}_x(0,y)=&\frac{a}{D}\tilde{Q}(0,y).
\end{align}
This kernel PDE is well-posed, so the solution of $Q(x,y)$ exists, which means direct transformation exists. As it is in the case of the inverse transformation, the closed-form solution of the direct kernel equation cannot be obtained. By applying the procedure in the previous section, we have the bound as $|Q(x,y)|\leq Me^{2Mx}$.

\subsection{Stability analysis of observer error system}

We consider the following Lyapunov function for the observer error target system 
\begin{align}
    \tilde{V}=\tilde{V}_{11}+\tilde{V}_{12}+d_2\tilde{V}_2+\frac{\gamma_1}{2} \tilde{w}(0,t)^2,
    \label{tilde-V-total}
\end{align}
where
\begin{align}\label{Vtilde11}
\tilde{V}_{11}=&\fr{1}{2}d_1 ||\tilde{w}||^2:=\fr{1}{2}d_1 \int_0^{l(t)} \tilde{w}(x,t)^2 dx, \\
\tilde{V}_{12}=&\fr{1}{2} ||\tilde{w}_x||^2:=\fr{1}{2} \int_0^{l(t)} \tilde{w}_x(x,t)^2 dx,  \\
\tilde{V}_2 =& \tilde{X}(t)^\top P \tilde{X}(t),\label{eqn:Vtil2}
\end{align}
and $P >0$ is a positive definite matrix satisfying the Lyapunov equation: 
\begin{align}
(A - LC )^\top P + P (A -LC )=-Q,
\end{align}
for some positive definite matrix $Q$. Since $(A-LC)$ is Hurwitz, positive definite matrices $P$ and $Q$ exist. In addition, we can denote that
$F(x,\tilde{X}(t))=\left(P(x,\tilde{X}(t)+l_{\rm s})-Q(x,\tilde{X}(t)+l_{\rm s})\right)H^\top\tilde{X}(t)$
Then, we state the following lemma. 
\begin{lemma}
Assume that \eqref{ineq-l} and \eqref{ineq-ldot} are satisfied for
\begin{align}
    \bar v=\min\left\{\frac{D}{8\bar l}, \frac{g+\lambda}{2\gamma_1}\right\},
\end{align}
for all time $t\geq 0$. Then, we conclude that for sufficiently large enough $d_1>0$ and $d_2>0$, there exists a positive constant
$\alpha_1=\min \left\{d_1\frac{D}{2},d_1\left(D+2\lambda\right),\left(g+2\lambda\right),\frac{\lambda_{\rm min}(Q)}{2\lambda_{\rm max}(P)} \right\}$
which satisfies the following norm estimates
\begin{align}
    \dot{\tilde{V}}\leq -\alpha_1\tilde{V}.
\end{align}
\label{lem:lamma1}
\end{lemma}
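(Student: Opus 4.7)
The plan is to compute $\dot{\tilde V}$ term by term, cancel the boundary contribution at $x=0$ with the explicit $\frac{\gamma_1}{2}\tilde w(0,t)^2$ penalty, and dissipate every residual cross-term either into $-D\|\tilde w_x\|^2$ (using the smallness of $\bar v$) or into $-\tilde X^\top Q\tilde X/2$ (by choosing $d_2$ large enough), then scale everything by $d_1\gg 1$ so that the pure $L^2$ term $\tilde V_{11}$ dominates what is left.

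First I will differentiate $\tilde V_{11}$ using Leibniz: a boundary term $\frac{d_1}{2}\dot l \tilde w(l,t)^2 = \frac{d_1}{2}\dot l(H^\top \tilde X)^2$ appears, and substituting the target PDE \eqref{error-tar-PDE1} plus integration by parts produces $-d_1 D\|\tilde w_x\|^2$, $-d_1(g+\lambda)\|\tilde w\|^2$, the boundary terms $d_1 D H^\top\tilde X\,\tilde w_x(l,t)$, $-d_1 D\gamma_1 \tilde w(0,t)^2$, $-\frac{d_1 a}{2}(H^\top\tilde X)^2 + \frac{d_1 a}{2}\tilde w(0,t)^2$, and a nonlocal cross term $d_1 \dot l\, H^\top\tilde X\int_0^{l(t)} (Q-P)\tilde w\,dx$. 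For $\tilde V_{12}$ I will integrate by parts once to replace $\int \tilde w_x\tilde w_{xt}\,dx$ by boundary terms at $0$ and $l(t)$ plus $-\int \tilde w_{xx}\tilde w_t\,dx$; at $x=l(t)$ I use the compatibility $\tilde w_t(l,t)= H^\top(A-LC)\tilde X-\dot l\,\tilde w_x(l,t)$ obtained by differentiating \eqref{error-tar-PDE3}, and at $x=0$ I use $\tilde w_x(0,t)=\gamma_1\tilde w(0,t)$ which is exactly what the extra term $\frac{\gamma_1}{2}\tilde w(0,t)^2$ is designed to cancel after one observes that its derivative is $\gamma_1\tilde w(0,t)\tilde w_t(0,t)$. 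For the ODE piece $d_2\tilde V_2$ the Lyapunov equation gives $\dot{\tilde V}_2=-\tilde X^\top Q\tilde X$.

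After summing, I expect the following structure: a negative definite block $-d_1\frac{D}{2}\|\tilde w_x\|^2-d_1(g+\lambda)\|\tilde w\|^2-\frac{D}{2}\|\tilde w_{xx}\|^2-(g+\lambda)\|\tilde w_x\|^2-d_2\tilde X^\top Q\tilde X$ (after applying Young's inequality liberally to $\tilde w_x(l,t)$, $H^\top\tilde X\,\tilde w_x(l,t)$ and $H^\top(A-LC)\tilde X\,\tilde w_x(l,t)$), plus residuals. The residuals divide into two families: (i) $\dot l$-terms, namely $\dot l\tilde w(l,t)^2$, $\dot l\tilde w_x(l,t)^2$ and the nonlocal integrals $\dot l \int(Q-P)\tilde w(l,t)\tilde w_{xx}\,dx$ etc.; (ii) $\tilde X$-terms proportional to $|H^\top\tilde X|^2$. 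The $\tilde X$-terms are swallowed by $\frac{d_2}{2}\lambda_{\min}(Q)|\tilde X|^2$ for $d_2$ sufficiently large. The $\dot l$ terms I bound using Cauchy--Schwarz, the trace/Agmon inequality $\tilde w(l,t)^2\le \tilde w(0,t)^2+2\|\tilde w\|\|\tilde w_x\|$, and the a-priori bounds $|P|,|Q|\le Me^{2M\bar l}$ from the successive-approximation analysis; the hypothesis $\bar v\le D/(8\bar l)$ is precisely what guarantees $\bar v \tilde w(l,t)^2\le \frac{D}{4}\|\tilde w_x\|^2+\ldots$, and $\bar v\le (g+\lambda)/(2\gamma_1)$ absorbs $\dot l\tilde w(0,t)^2$ into the $\gamma_1\tilde w(0,t)^2$ dissipation generated above.

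The main obstacle, I expect, is the bookkeeping for the nonlocal source $\dot l(Q-P)\tilde w(l,t)$: after it is hit twice by integration by parts in $\dot{\tilde V}_{12}$ it produces several new interior terms coupling $\tilde X$ (through $\tilde w(l,t)=H^\top\tilde X$) to $\tilde w_{xx}$ and $\tilde w_x$, and making all of these small enough to fit under the available negative quadratic forms is where the largeness of $d_1,d_2$ and the smallness of $\bar v$ must all be calibrated simultaneously. Once this is done, collecting terms yields $\dot{\tilde V}\le -\alpha_1\tilde V$ with $\alpha_1$ exactly of the stated form, since each component of $\tilde V$ is matched, respectively, with the bounds on $-d_1(g+\lambda)\|\tilde w\|^2$, $-d_1 D\|\tilde w_x\|^2/2$, $-(g+2\lambda)\tilde w(0,t)^2\cdot\gamma_1/2$ (absorbed through the boundary penalty), and $-\lambda_{\min}(Q)/(2\lambda_{\max}(P))\cdot d_2\tilde V_2$.
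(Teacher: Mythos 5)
Your proposal is correct and follows essentially the same route as the paper's proof: differentiate $\tilde V_{11},\tilde V_{12},\tilde V_2$ along the target dynamics (Leibniz plus two integrations by parts, compatibility of $\tilde w_t(l(t),t)$ from the ODE boundary condition, cancellation at $x=0$ by the explicit $\frac{\gamma_1}{2}\tilde w(0,t)^2$ term), apply Young/Cauchy--Schwarz/Agmon, absorb the $\tilde X$-residuals with a large $d_2$ and the $\dot l$-residuals via the kernel bounds and the smallness hypothesis $\bar v\le\min\{D/(8\bar l),(g+\lambda)/(2\gamma_1)\}$, and finally choose $d_1$ to dominate. The decomposition, the inequalities invoked, and the role each smallness/largeness parameter plays all match the paper's argument.
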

\begin{proof}
Taking the time derivative of the Lyapunov functions along the target system \eqref{error-tar-PDE1}--\eqref{error-tar-ODE}, we have
\begin{align}
\dot{\tilde{V}}_{11} =&
    d_1D\tilde{w}(l(t),t)\tilde{w}_x(l(t),t)-d_1\left(D\gamma_1-\frac{a}{2}\right)\tilde{w}(0,t)^2\nonumber \\ -d_1&\dot{l}(t)\int_0^{l(t)}\tilde{w}(x,t)F(x,\tilde{X}(t))dx-a\frac{d_1}{2}\tilde{w}(l(t),t)^2\nonumber \\ -d_1&D||\tilde{w}_x||^2-d_1(g+\lambda)||\tilde{w}||^2+d_1\frac{\dot{l}(t)}{2}\tilde{w}(l(t),t)^2  \label{eqn:V-tilde-dot-11}
\end{align}
\begin{align}
    \dot{\tilde{V}}_{12}=
    &H^\top(A-LC)\tilde{X}(t)\tilde{w}_x(l(t),t)-\frac{1}{2}\dot{l}(t)\tilde{w}_x(l(t),t)^2\nonumber \\ &-\tilde{w}_x(0,t)\tilde{w}_t(0,t)-D||\tilde{w}_{xx}||^2\nonumber \\ &+\int_0^{l(t)}a\tilde{w}_{xx}\tilde{w}_xdx-\left(g+\lambda\right)||\tilde{w}_x||^2\nonumber \\ &+(g+\lambda)\tilde{w}(l(t),t)\tilde{w}_x(l(t),t)-(g+\lambda)\gamma_1\tilde{w}(0,t)^2\nonumber \\
    &-\dot{l}(t)\int_0^{l(t)}\tilde{w}_{xx}(x,t)F(x,\tilde{X}(t))dx,    \label{eqn:V-tilde-dot-12}\\
\dot{\tilde{V}}_{2}=&-\tilde{X}(t)^\top Q \tilde{X}(t),  \label{eqn:V-tilde-dot-2}
\end{align}
where $d_1>0$. Applying Young's inequality and \eqref{error-tar-ODE}, and by using \eqref{error-tar-PDE3} to the time derivative of $\dot{\tilde{V}}_{11}$ in \eqref{eqn:V-tilde-dot-11} leads to
\begin{align}
    \dot{\tilde{V}}_{11} \leq&-d_1\left(D\gamma_1-\frac{a}{2}\right)\tilde{w}(0,t)^2-d_1D||\tilde{w}_x||^2\nonumber \\
    &-d_1(g+\lambda)||\tilde{w}||^2+\frac{D\epsilon_1}{2}\tilde{w}_x(l(t),t)^2\nonumber \\ &-d_1\dot{l}(t)\int_0^{l(t)}\tilde{w}(x,t)F(x,\tilde{X}(t))dx \nonumber \\
    &+\left(d_1^2\frac{D}{2\epsilon_1}+d_1\frac{\bar{l}(t)}{2}-d_1\frac{a}{2}\right)\tilde{w}(l(t),t)^2,
    \label{eqn:tar-err-V11}
\end{align}
where $\epsilon_1> 0$ is an arbitrarily small constant. Similarly, using Agmon's inequalities and Young's inequality (defined in \cite{demir2021neuron}) into the time derivative of $\tilde{V}_{12}$ in \eqref{eqn:V-tilde-dot-12} gives us
\begin{align}
    \dot{\tilde{V}}_{12}\leq & \gamma_1^2\bigg(D\epsilon_1+\epsilon_2+\epsilon_3(g+\lambda)+\bar v+\frac{\bar v \epsilon_5}{2}\bigg)\tilde{w}(0,t)^2\nonumber \\
    &-\bigg(\gamma_1(g+\lambda)\bigg)\tilde{w}(0,t)^2-\bigg(D-\frac{D}{4}\bigg)||\tilde{w}_{xx}||^2\nonumber \\
    &-\bigg(-2\bar l(D\epsilon_1+\epsilon_2+\epsilon_3(g+\lambda)+\epsilon_4\bar v+\bar v)\bigg)||\tilde{w}_{xx}||^2\nonumber \\
    &-\left((g+\lambda)-\frac{a^2}{D}\right)||\tilde{w}_x||^2-\gamma_1\tilde{w}(0,t)\tilde{w}_t(0,t)\nonumber \\
    &+\bigg(\left(d_1^2\frac{D}{2\epsilon_1}+d_1\frac{a}{2}+d_1\frac{\bar v}{2}+\frac{(g+\lambda)}{2\epsilon_3}\right)\lambda_{\rm max}(HH^\top)\nonumber \\
    &+\frac{1}{2\epsilon_2}\lambda_{\rm max}((H^\top(A-LC))^2)\bigg)\tilde{X}^\top \tilde{X}\nonumber \\
    &+\frac{|\dot{l}(t)|}{2\epsilon_4}F(l(t),\tilde{X}(t))^2+\frac{|\dot{l}(t)|}{2\epsilon_5}\gamma_1 F(0,\tilde{X}(t))^2\nonumber \\
    &+|\dot{l}(t)|\int_0^{l(t)}\tilde{w}_{x}(x,t)F_x(x,\tilde{X}(t))dx,
    \label{eqn:tar-err-V12}
\end{align}
where $\epsilon_i>0$ for $i=\{1,...,5\}$ are arbitrarily small constants. The time derivative of $\tilde{V}_2$ in \eqref{eqn:V-tilde-dot-2} is given by
\begin{align}
     \dot{\tilde{V}}_2\leq -\lambda_{\rm min}(Q)\tilde{X}^\top\tilde{X}.
     \label{eqn:tar-err-V2}
\end{align}
Then, we use Young's and Cauchy-Schwarz Inequalities  for $F(.,.)$ terms. There exists positive constants $L_i>0$, for $i=1,2,3,4$, we have $F(0,\tilde{X}(t))^2\leq L_1 \tilde{X}^\top\tilde{X}$, $F(l(t),\tilde{X}(t))^2\leq L_2 \tilde{X}^\top\tilde{X}$, $\int_0^{l(t)}F(x,\tilde{X}(t))^2dx\leq L_3 \tilde{X}^\top\tilde{X}$, and $\int_0^{l(t)}F_x(x,\tilde{X}(t))^2dx\leq L_4 \tilde{X}^\top\tilde{X}$. With these inequalities and \eqref{eqn:tar-err-V11}-\eqref{eqn:tar-err-V2}, \eqref{eqn:tar-err-V12} becomes
\begin{align}
    \dot{\tilde{V}}\leq& -\frac{D}{2}||\tilde{w}_{xx}||^2-d_1\frac{D\gamma_1}{2}\tilde{w}(0,t)^2-d_1(g+\lambda)||\tilde{w}||^2\nonumber \\
    &+\frac{\bar v\epsilon_6}{2}||\tilde{w}||^2-\left(d_1D+(g+\lambda)-\frac{a^2}{D}-\frac{\bar v \epsilon_7}{2}\right)||\tilde{w}_x||^2\nonumber \\
    &-\gamma_1\tilde{w}(0,t)\tilde{w}_t(0,t)+d_1\frac{\bar v}{2}L_1|\tilde{X}(t)|^2+\frac{\bar v}{2\epsilon_4}L_2\bar v^2\nonumber \\
    &+d_1^2\frac{\bar v}{2\epsilon_6}L_3\bar v^2+\frac{\bar v}{2\epsilon_7}L_4|\tilde{X}(t)|^2-d_2\lambda_{\rm min}(Q)|\tilde{X}(t)|^2\nonumber \\
    &+\bigg(\left(d_1^2\frac{D}{2\epsilon_1}+d_1\frac{a}{2}+d_1\frac{\bar v}{2}+\frac{(g+\lambda)}{2\epsilon_3}\right)\lambda_{\rm max}(HH^\top)\nonumber \\
    &+\frac{1}{2\epsilon_2}\lambda_{\rm max}((H^\top(A-LC))^2)\bigg)|\tilde{X}(t)|^2.
    \label{eqn:tot-tilde-V-ineq}
\end{align}
By using positive definiteness, we recall
\begin{align}
    \label{ineq-XPX} 
    \lam_{\rm min}(P) X^\top X \leq X^\top P X \leq \lam_{\rm max}(P) X^\top X,
\end{align}
where $\lam_{\rm min}(P) >0$ and $\lam_{\rm max}(P)>0$ are the smallest and the largest eigenvalues of $P$. Finally, by recalling $\gamma_1 \geq \frac{D}{a}$, we can choose constants $d_1$ and $d_2$ as
\begin{align}
  \label{eqn:d1-con}
    d_1 \geq &\frac{2a^2+D\bar v \epsilon_7}{D^2},
\\
    d_2 \geq &\frac{2}{\lambda_{\rm min}(Q)}\bigg(\left(\frac{D}{2\epsilon_1}+d_1\frac{a+\bar v}{2}\right)\lambda_{\rm max}(HH^\top)\nonumber \\
    &+\frac{(g+\lambda)}{2\epsilon_3}\lambda_{\rm max}(HH^\top)+\frac{\lambda_{\rm max}((H^\top(A-LC))^2)}{2\epsilon_2}\nonumber \\
    &+\left(\frac{d_1}{2}L_1+\frac{1}{2\epsilon_4}L_2+\frac{d_1^2}{2\epsilon_6}L_3+\frac{1}{2\epsilon_7}L_4\right)\bar v\bigg).
\end{align}
Thus, one can show that \eqref{eqn:tot-tilde-V-ineq} leads to
\begin{align}
    \dot{\tilde{V}}\leq &-d_1\frac{D\gamma_1}{2}\tilde{w}(0,t)^2-d_1\left(D+2\lambda\right)\tilde{V}_{12}-\left(g+2\lambda\right)\tilde{V}_{11}\nonumber \\
    &-d_2\frac{\lambda_{\rm min}(Q)}{2\lambda_{\rm max}(P)}\tilde{V}_2 \nonumber \\
    \leq& -\alpha_1 \tilde{V}.
    \label{eqn:Vtilde-final}
\end{align}
Thus, Lemma 1 holds.
\end{proof} Hence, the target $\tilde{w}$-system \eqref{error-tar-PDE1}-\eqref{error-tar-ODE} is exponentially stable at the origin. Because of the invertibility of backstepping transformation, the stability of $\tilde{w}$-system promotes to the original $\tilde{u}$-system \eqref{error-PDE}-\eqref{error-ODE} exponentially stable. This completes the proof of Theorem \ref{thm:teorem1}.

\section{OBSERVER-BASED OUTPUT-FEEDBACK CONTROL}
\label{sec:section4}
In this section, an output-feedback control law is constructed using the observer designed in Section \ref{sec:observer} with the measurements \eqref{measurements}, and the following theorem holds.
\begin{theorem}
\label{thm:teorem2}
Consider the closed-loop system  \eqref{ulin-PDE}-\eqref{ulin-ODE} with the measurements \eqref{measurements}, and the observer \eqref{obulin-PDE}-\eqref{obulin-ODE} under the output-feedback control law:
\begin{align}
    &U(t)=\frac{D\gamma_2-\beta}{D}\hat{u}(0,t)+\phi'(-l(t))^\top-\gamma_2\phi(-l(t))^\top\hat{X}(t)\nonumber \\ &  \quad \ \ \ \  -\frac{1}{D}\int_0^{l(t)}(\phi'(-y)^\top-\gamma_2\phi(-y)^\top) B\hat{u}(y,t)dy
    \label{eqn:controller}
\end{align}
where $\gamma_2\geq \frac{a}{D}$ and $\phi(x)$ is
\begin{align}
    \phi(x)^\top=\begin{bmatrix}H^\top & K^\top-\frac{1}{D}H^\top BH^\top\end{bmatrix}e^{N_1x}\begin{bmatrix} I \\ 0
\end{bmatrix}.
\end{align}
The matrices $K = [ k_1 \quad k_2]$ and $N_1 \in \R^{4 \times 4}$ is defined as 
\begin{align}
    N_1=\begin{bmatrix}0 & \frac{1}{D}\left(gI+A+\frac{a}{D}BH^\top\right)\\ I &\frac{1}{D}\left(BH^\top+aI\right)\end{bmatrix},
\end{align}
where $k_1 > \frac{\tilde a}{\beta}$, and $k_2 > 0$. Then, there exist positive constants $\bar{M}>0$, $\kappa>0$ and $\zeta>0$ such that if $\Phi(0)<\bar{M}$ where
$\Phi(t):=||u||_{\text{H}_1}^2+|X|^2+||\hat{u}||_{\text{H}_1}^2+|\hat{X}|^2$,
then the following norm estimate holds:
\begin{align}
    \Phi(t)\leq \zeta\Phi(0)\exp\left(-\kappa t\right). 
\end{align}
Namely, the closed-loop system is locally stable in the sense of $\mathcal{H}_1$-norm.
\end{theorem}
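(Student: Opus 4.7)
The plan is to apply backstepping to the observer (estimator) subsystem in order to convert $(\hat{u},\hat{X})$ into a target $(\hat{w},\hat{X})$-system whose PDE part is exponentially stable in $\mathcal{H}_1$, and then to combine the resulting Lyapunov estimate with the observer-error estimate already furnished by Lemma \ref{lem:lamma1} into a composite Lyapunov function. The controller $U(t)$ in \eqref{eqn:controller} is precisely the one whose Neumann boundary condition is obtained by differentiating and evaluating at $x=0$ the backstepping map
\begin{align}
\hat{w}(x,t)=\hat{u}(x,t)-\phi(x-l(t))^{\top}\hat{X}(t)-\frac{1}{D}\int_{0}^{x}\phi(x-y)^{\top}B\,\hat{u}(y,t)dy, \nonumber
\end{align}
so that the $(\hat{w},\hat{X})$ target satisfies a diffusion-reaction PDE of the same form as \eqref{error-tar-PDE1} with $\hat{w}_x(0,t)=\gamma_2\hat{w}(0,t)$, a Dirichlet condition $\hat{w}(l(t),t)=K^{\top}\hat{X}(t)$ with $A+BK$ Hurwitz (guaranteed by $k_1>\tilde a/\beta$, $k_2>0$), and perturbations driven by $\dot l(t)$, by the observer injection $LC\tilde{X}(t)$, and by the flux error $\tilde{u}_x(l(t),t)$.

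First I would establish the well-posedness and boundedness of the kernel $\phi$ by observing that it solves a linear ODE with constant matrix $N_1$, hence is globally bounded on any fixed interval $[-\bar l,0]$. Next I would derive the $(\hat{w},\hat{X})$ target explicitly and verify that, modulo the moving-boundary terms proportional to $\dot l(t)$ and the coupling terms proportional to $\tilde{X}$ and $\tilde{u}_x(l(t),t)$, the target is exponentially stable: this is done with a Lyapunov function of the same shape as \eqref{tilde-V-total},
\begin{align}
\hat{V}=\tfrac{1}{2}\hat d_1\|\hat{w}\|^{2}+\tfrac{1}{2}\|\hat{w}_x\|^{2}+\hat d_2\,\hat{X}^{\top}\hat P\hat{X}+\tfrac{\gamma_2}{2}\hat{w}(0,t)^{2}, \nonumber
\end{align}
with $\hat P$ solving the Lyapunov equation for $A+BK$. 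The computation mirrors Lemma \ref{lem:lamma1} almost line-for-line: Young, Agmon and Cauchy--Schwarz absorb the $\dot l$-terms into the negative-definite part provided $\bar v$ is chosen small enough, while the observer-injection cross terms are bounded by $\tilde V$ through $LC\tilde X$ and $\tilde u_x(l(t),t)$. The resulting inequality is $\dot{\hat V}\leq-\alpha_2\hat V+\mu\,\tilde V$ for some $\alpha_2>0$ and $\mu>0$.

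The second and main step is to combine the two Lyapunov functions. Taking $V=\hat V+\eta\tilde V$ for $\eta>0$ large enough yields $\dot V\leq -\alpha V$ with $\alpha=\min\{\alpha_2,\eta\alpha_1/2\}>0$, provided that Assumptions \ref{asm:assump1}--\ref{asm:assump2} hold along the closed-loop trajectory. Since $\dot l(t)=r_g z_1(t)$ and $l(t)=l_{\rm s}+z_2(t)$, those assumptions are a restriction on the state, not a standing hypothesis; hence the core of the theorem is a local argument. Using the Sobolev embedding $|z_1(t)|\leq C|X(t)|\leq C'\sqrt{V(t)}$ and $|z_2(t)|\leq C\sqrt{V(t)}$, together with the equivalence $V(t)\asymp\Phi(t)$ ensured by the boundedness of the direct and inverse backstepping kernels (both for the estimator and for the observer error), one obtains $|\dot l(t)|\leq c_1\sqrt{\Phi(t)}$ and $|l(t)-l_{\rm s}|\leq c_2\sqrt{\Phi(t)}$. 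Choosing $\bar M$ so small that $c_1\sqrt{\zeta \bar M}\leq \bar v$ and $c_2\sqrt{\zeta\bar M}\leq l_{\rm s}/2$ guarantees by a standard continuation argument that whenever $\Phi(0)<\bar M$, the bounds on $l(t)$ and $\dot l(t)$ are never violated, so Assumptions \ref{asm:assump1} and \ref{asm:assump2} persist and the exponential decay estimate $V(t)\leq V(0)e^{-\alpha t}$ closes on itself.

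Finally, translating back to the original $(u,\hat u,X,\hat X)$ coordinates via invertibility of both the observer-error and the estimator backstepping transformations yields $\Phi(t)\leq \zeta\Phi(0)e^{-\kappa t}$ with $\kappa=\alpha$ and $\zeta$ depending on the kernel bounds, completing the proof. The main obstacle will be the bookkeeping in the Lyapunov calculation for $\hat V$: the moving-boundary terms $\dot l(t)(Q-P)\hat{w}(l(t),t)$ and the injected flux error $\tilde u_x(l(t),t)$ feed into $\hat{w}_x$ boundary traces and create cross terms that must be dominated by $\alpha_1\tilde V$ and by the diffusion $-D\|\hat{w}_{xx}\|^{2}$; this is the place where the smallness requirement on $\bar v$, and hence on $\bar M$, is effectively fixed.
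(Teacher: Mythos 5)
Your overall strategy matches the paper's: backstep the observer subsystem $(\hat u,\hat X)$ into a target $(\hat w,\hat X)$-system, form a weighted composite Lyapunov function with the observer-error Lyapunov function from Lemma~\ref{lem:lamma1}, and close a local argument because $\dot l(t)$ depends on the state. However, there are two concrete discrepancies worth flagging.

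First, the backstepping transformation you wrote,
$\hat w(x,t)=\hat u(x,t)-\phi(x-l(t))^{\top}\hat X(t)-\tfrac{1}{D}\int_0^{x}\phi(x-y)^{\top}B\hat u(y,t)dy$,
has the integral over $[0,x]$, so it vanishes at $x=0$; differentiating and evaluating there could not produce the integral over $[0,l(t)]$ that appears in the controller \eqref{eqn:controller}. The paper's transform \eqref{bkst} is $\hat w=\hat u-\int_x^{l(t)}k(x,y)\hat u(y,t)dy-\phi(x-l(t))^{\top}\hat X(t)$, i.e.\ a Volterra operator running toward the moving boundary, and evaluating it at $x=l(t)$ with $\phi(0)^\top=H^\top$ gives the homogeneous Dirichlet condition $\hat w(l(t),t)=0$ in \eqref{eqn:whbound2}, not $\hat w(l(t),t)=K^{\top}\hat X(t)$ as you claim. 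The cancellation of the Dirichlet trace is precisely what makes the target PDE stabilizable with the chosen $\gamma_2$, so the distinction is not cosmetic.

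Second, and more substantively, your Lyapunov calculation stops at a linear differential inequality $\dot V\leq-\alpha V$ conditional on $|\dot l|\leq\bar v$, and closes the loop with a continuation argument. The paper instead keeps $\dot l(t)=r_{\rm g}e_1^{\top}X(t)$ as a state-dependent quantity in selected terms (giving $|\dot l(t)|^2\leq\delta^2 V_{\rm tot}(t)$), which produces the genuinely nonlinear estimate $\dot V_{\rm tot}\leq-\alpha V_{\rm tot}+\beta V_{\rm tot}^{3/2}$ of Lemma~\ref{lem:lemma2}, and then invokes the persistence/barrier lemmas of \cite{demir2021neuron} to conclude $V_{\rm tot}(t)\leq V_{\rm tot}(0)\exp(-\alpha t/2)$. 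Both routes yield local exponential stability, but they are different decompositions of the same bootstrap: the paper makes the cubic (in the state) coupling visible and absorbs it using a region-of-attraction lemma, while you hide it inside the $\bar v$-bound and re-verify $\bar v$ a posteriori. Your version is legitimate in principle, but you should be explicit that the coefficients in $\dot V\leq-\alpha V+\mu\tilde V$ depend on $\bar v$ and that the $F(\cdot,\hat X)$- and $E(\cdot,\hat X)$-driven cross terms multiplied by $\dot l$ are degree-three in the state, which is exactly what forces the smallness condition on $\Phi(0)$; as written, your sketch reads as if the linear estimate were unconditional.
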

\subsection{Backstepping transformation}
The following transformation from $(\hat{u},\hat{X})$ into $(\hat w, \hat X)$ is implemented by using the gain kernels proposed in \cite{demir2021neuron}
\begin{align} 
\label{bkst}
\hat{w}(x,t) = &\hat{u}(x,t) - \int_x^{l(t)} k(x,y) \hat{u}(y,t) dy \nonumber \\
&- \phi(x - l(t))^T \hat{X}(t), 
\end{align}
and the reverse transformation is
\begin{align}
    \hat{u}(x,t)=&\hat{w}(x,t)+\int_{x}^{l(t)}q(x,y)\hat{w}(y,t)dy\nonumber \\&+\varphi(x-l(t))^\top \hat{X}(t).
    \label{bkst-back}
\end{align}
Taking the time and spatial derivatives of the transformation above, the target $\hat w$-system is obtained as 

\begin{align}
    \hat{w}_t=&D\hat{w}_{xx}-a\hat{w}_x-g\hat{w} +\dot l(t) E(x,\hat{X}(t))  \nonumber \\
    &+p_1(x,l(t))\tilde{u}_x(l(t),t)\nonumber \\
    &-\int_x^{l(t)}k(x,y)p_1(y,l(t))\tilde{u}_x(l(t),t)dy, \label{eqn:targetwhat-t} \\
    \hat{w}_x(0,t)=&\gamma_2 \hat{w}(0,t), \label{eqn:whbound1}\\
    \hat{w}(l(t),t)=&0, \label{eqn:whbound2}\\
    \dot{\hat{X}}(t)=&(A+BK)\hat{X}(t)+B\hat{w}_x(l(t),t)\nonumber \\
    &+B\tilde{u}_x(l(t),t)+LC\tilde{X}(t),
\end{align}
where we denote $E(x,\hat{X}(t)) =(\phi'(x-\hat{X}(t) - l_{\rm s})^\top-k(x,\hat{X}(t) + l_{\rm s}) H^\top)\hat{X}(t)$ . By evaluating the spatial derivative of \eqref{bkst} at $x=0$, we derive the control law as in \eqref{eqn:controller}.

The gain kernels $k(x,y)$ and $\phi(x)$ are derived in \cite{demir2021neuron}.
\subsection{Stability analysis}

Define the Lypunov function for closed-loop system as
\begin{align}
    V_{\rm tot}=c_1\tilde{V}(t)+\hat{V}_{11}+\hat{V}_{12}+\frac{1}{2}\gamma_2 \hat{w}(0,t)^2+d_4\hat{V}_2
    \label{Vtot1}
\end{align}
where $c_1>0$ is chosen to sufficiently large, $\tilde{V}(t)$ is defined in \eqref{tilde-V-total}--\eqref{eqn:Vtil2} and each term in $\hat{V}(t)$ is
\begin{small}
\begin{align}
    \hat{V}_{11}(t)=&\fr{1}{2}d_3 ||\hat{w}||^2:=\fr{1}{2}d_3 \int_0^{l(t)} \hat{w}(x,t)^2 dx, \\
    \hat{V}_{12}(t)=&\fr{1}{2} ||\hat{w}_x||^2:=\fr{1}{2} \int_0^{l(t)} \hat{w}_x(x,t)^2 dx, \\   \hat{V}_{2}(t)=&\hat{X}(t)^\top  \hat{P} \hat{X}(t). 
\end{align}
\end{small}
Lyapunov function of closed-loop system is also written as
\begin{align}
    V_{\rm tot}(t)=& c_1\frac{1}{2}\left(d_1||\tilde{w}||^2+||\tilde{w}_x||^2\right)+\frac{1}{2}\left(d_3 ||\hat{w}||^2+||\hat{w}_x||^2\right)\nonumber \\
    &+c_1d_2\tilde{X}(t)^\top P \tilde{X}(t)+d_4\hat{X}(t)^\top  \hat{P} \hat{X}(t)\nonumber \\
    &+\frac{1}{2}\left(c_1\gamma_1\tilde{w}(0,t)^2+\gamma_2 \hat{w}(0,t)^2\right).
\label{Vtot}
\end{align}Then, we state the following lemma.
\begin{lemma}
\label{lem:lemma2}
Let Assumptions \ref{asm:assump1} and \ref{asm:assump2} hold with 
\begin{align}
    \bar{v}\leq\min\left\{ \frac{g}{3\gamma_2},\frac{D}{8\bar l}, \frac{g+\lambda}{2\gamma_1}\right\},
\end{align}
for all time $t\geq 0$. Then, for sufficiently large enough $d_3>0$ and $d_4>0$, there exists a positive constant $\alpha>0$ and $\beta>0$ such that the following norm estimates holds
\begin{align}
    \dot{V}_{\rm tot}   \leq& -\alpha V_{\rm tot}+\beta V_{\rm tot}^{3/2}. 
\end{align}
%which guarantees the local stability of $(\hat{X},\hat{w},\tilde{X},\tilde{w})$-system.
\end{lemma}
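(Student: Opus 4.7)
The plan is to differentiate $V_{\rm tot}$ as decomposed in \eqref{Vtot} along the coupled target system consisting of the observer error dynamics \eqref{error-tar-PDE1}--\eqref{error-tar-ODE} and the closed-loop observer target system \eqref{eqn:targetwhat-t}--\eqref{eqn:whbound2}. Lemma \ref{lem:lamma1} immediately gives $c_1\dot{\tilde V}\le -c_1\alpha_1\tilde V$, so the remaining work is to estimate $\dot{\hat V}_{11}+\dot{\hat V}_{12}+\frac{\gamma_2}{2}\frac{d}{dt}\hat w(0,t)^2+d_4\dot{\hat V}_2$ and show that all cross-couplings to $(\tilde u,\tilde X)$ and the moving-boundary nonlinearities can be absorbed.

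First I would compute $\dot{\hat V}_{11}$ and $\dot{\hat V}_{12}$ by integration by parts, using $\hat w(l(t),t)=0$, which conveniently annihilates the $\dot l(t)\hat w(l(t),t)^2$ Leibniz contribution to $\dot{\hat V}_{11}$, and $\hat w_x(0,t)=\gamma_2\hat w(0,t)$, which produces a $-Dd_3\gamma_2\hat w(0,t)^2$ dissipative term in $\dot{\hat V}_{11}$ and a boundary cross term $\gamma_2\hat w(0,t)\hat w_t(0,t)$ in $\dot{\hat V}_{12}$ that is exactly cancelled by $\frac{\gamma_2}{2}\frac{d}{dt}\hat w(0,t)^2$. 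The derivative $\dot{\hat V}_2$ yields the principal dissipation $-\hat X^\top\hat Q\hat X$ from the Lyapunov equation for the Hurwitz matrix $A+BK$, plus linear cross terms $\hat X^\top\hat PB\hat w_x(l(t),t)$, $\hat X^\top\hat PB\tilde u_x(l(t),t)$, and $\hat X^\top\hat PLC\tilde X$.

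Next I would classify the residuals into three families. Family (i), linear intra-observer cross terms (e.g.\ $\hat X$ against $\hat w_x(l(t),t)$ and $\hat w$ against $\hat w_{xx}$): handled by Young's inequality and absorbed by dissipation for $d_3,d_4$ sufficiently large, mirroring the template of the proof of Lemma \ref{lem:lamma1}. Family (ii), observer-error forcing $p_1(x,l(t))\tilde u_x(l(t),t)$, $\int_x^{l(t)}k(x,y)p_1(y,l(t))\,dy\,\tilde u_x(l(t),t)$, $B\tilde u_x(l(t),t)$, and $LC\tilde X(t)$: bound via Young's inequality, the kernel bounds $|k|,|p_1|\le Me^{2M\bar l}$ from the successive-approximation analysis, and Agmon's inequality to control $\tilde u_x(l(t),t)^2$ by $\|\tilde u_x\|^2+\|\tilde u_x\|\|\tilde u_{xx}\|$; after translating back through the bounded inverse backstepping transformation, these are absorbed by $c_1\alpha_1\tilde V$ for $c_1$ sufficiently large. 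Family (iii), the moving-boundary nonlinearities $\dot l(t)\hat w_x(l(t),t)^2$, $\dot l(t)\int_0^{l(t)}\hat w\,E(x,\hat X)\,dx$, and $\dot l(t)\int_0^{l(t)}\hat w_{xx}\,E(x,\hat X)\,dx$: the first is handled by $|\dot l(t)|\le\bar v\le g/(3\gamma_2)$ and Agmon, with the surplus quadratic term absorbed by the $g$-part of $\hat V_{11}$ dissipation; the remaining nonlinear integrals are treated by the exact relation $\dot l(t)=r_g z_1(t)$, so that $|\dot l(t)|\le r_g(|\tilde X|+|\hat X|)\le C\sqrt{V_{\rm tot}}$, combined with Cauchy--Schwarz and integrated bounds of the form $\int_0^{l(t)}|E(x,\hat X)|^2\,dx\le\hat L|\hat X|^2$ analogous to the $L_i$ established for $F$; this produces a contribution of order $\sqrt{V_{\rm tot}}\cdot V_{\rm tot}=V_{\rm tot}^{3/2}$, supplying the $+\beta V_{\rm tot}^{3/2}$ remainder.

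The main obstacle is the constant selection, because $d_3,d_4,c_1,\epsilon_i$ are coupled through the Young-inequality bookkeeping: they must be fixed in the correct order (first the matrix weight $d_4$ in $\hat V_2$, then $d_3$ to dominate $\|\hat w_x\|^2$--$\hat X$ and boundary cross terms, and finally $c_1$ large enough so that $-c_1\alpha_1\tilde V$ dominates every residual proportional to $\tilde V$). A secondary difficulty, compared with the observer-only analysis of Lemma \ref{lem:lamma1}, is that $\dot l(t)$ cannot here be treated purely as an exogenous a priori bounded quantity: the cubic feedback through $\dot l(t)=r_g z_1(t)$ is precisely what produces the $V_{\rm tot}^{3/2}$ correction and forces the conclusion to be only local in $V_{\rm tot}$.
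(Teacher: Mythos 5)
Your proposal is correct and follows essentially the same route as the paper, which itself gives only a one-paragraph proof that delegates the bookkeeping to the companion state-feedback paper and records the final constants $\alpha$ and $\beta$. Your sketch is actually more explicit than the paper's: you correctly exploit Lemma~\ref{lem:lamma1} for the $c_1\tilde V$ block, correctly note that $\hat w(l(t),t)=0$ kills the Leibniz term in $\dot{\hat V}_{11}$ and that $\frac{\gamma_2}{2}\frac{d}{dt}\hat w(0,t)^2$ cancels the boundary cross term from $\dot{\hat V}_{12}$, correctly identify the $A+BK$ Lyapunov dissipation plus the $B\tilde u_x(l(t),t)$ and $LC\tilde X$ forcing in $\dot{\hat V}_2$, and correctly trace the $\beta V_{\rm tot}^{3/2}$ remainder to the substitution $\dot l(t)=r_{\rm g}e_1^\top X(t)$ with $|X|\le|\tilde X|+|\hat X|\lesssim\sqrt{V_{\rm tot}}$ paired with integrated bounds on $E$ — which is consistent with the paper's $\beta$ containing the factor $r_{\rm g}e_1^\top/\lambda_{\min}(\hat P)$ together with constants $L_5,\dots,L_8$ analogous to the $L_i$ for $F$. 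Your remark on the ordering of constants ($d_4$, then $d_3$, then $c_1$) and on $\dot l$ no longer being purely exogenous is exactly the structural point that distinguishes this lemma from Lemma~\ref{lem:lamma1} and forces the local (cubic-corrected) conclusion.
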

\begin{proof}
By applying Young's, Cauchy-Schwarz, Poincare's, and Agmon's inequalities, with the help of Assumption \ref{asm:assump1} and \ref{asm:assump2}, and following the same strategy that is offered in the \cite{demir2021neuron} for the Lyapunov analysis, one can derive
\begin{align}
    \dot{V}_{\rm tot}   \leq& -\alpha V_{\rm tot}+\beta V_{\rm tot}^{3/2},
    \label{eqn:result}
\end{align}
for 
\begin{align}
    \alpha&=\min\left\{\frac{1}{2}\alpha_1,d_3\frac{D}{2},\frac{g}{2},d_4\fr{\lambda_{\rm min}(\hat{Q})}{4\lambda_{\rm max}(\hat{P})}\right\}, \\
    \beta&=\frac{r_{\rm g} e_1^\top}{\lambda_{\rm min}(\hat{P})} \left(\frac{1}{2} L_5+\frac{1}{2}L_6+d_3\frac{1}{2\varepsilon_3}L_7+\frac{1}{2}L_8\right),
    \label{eqn:beta-hat}
\end{align}
where $\varepsilon_3\leq \frac{g}{\bar v}$ such that Lemma \ref{lem:lemma2} holds.
\end{proof}
To prove local stability, Lemma 2 from \cite{demir2021neuron} guarantees to the convergence in all time. It satisfies that $V_{\rm tot}(t)<M_1$ holds for some $M>0$, then $|\tilde{X}|<r$. Lemma 3 from \cite{demir2021neuron} proves that for \eqref{eqn:result}, if $V_{\rm tot}(0)<M$, then $V_{\rm tot}(t)<M$ for all $t>0$. In addition, $\dot{l}(t)$ can be written as $\dot{l}(t)=r_{\rm g}e_1^\top X(t)$, so we can bound $\dot{l}(t)$ to handle in the norm equivalence as
\begin{align}
    |\dot{l}(t)|&\leq r_{\rm g}e_1^\top \left(\sqrt{\frac{\tilde{V}_2}{\lambda_{\rm min }(P)}}+\sqrt{\frac{\hat{V}_2}{\lambda_{\rm min}(\hat{P})}}\right)
    \end{align}
This leads us $|\dot{l}(t)|^2\leq \delta^2 V_{\rm tot}(t)$.
Thus,
\begin{align}
    V_{\rm tot}(t)\leq V_{\rm tot}(0)\exp\left(-\frac{\alpha}{2}t\right).
\end{align}
The norm equivalence between the target and original systems is shown using the direct and inverse transformations of both observer target and observer error target systems. Taking square of  \eqref{error-bkst-inv}, \eqref{error-bkst-fwd}, \eqref{bkst}, and \eqref{bkst-back} and applying Young's and Cauchy-Schwarz inequalities, and integrating on $[0,l(t)]$, one can get the norm inequalities for the target system $(\hat{w},\hat{w}_x,\tilde{w},\tilde{w}_x)$. 
Then, let
$\Psi= ||\hat{w}||_{\text{H}_1}^2+|\hat{X}|^2+||\tilde{w}||_{\text{H}_1}^2+|\tilde{X}|^2$.
Using the norm inequalities for each term in $\Psi$ and \eqref{Vtot}, one can obtain $\overline{M}>0$ and $\underline{M}<0$ such that
\begin{align}
    \underline{M}\Psi(t)\leq V_{\rm tot}(t)\leq \overline{M}\Psi(t)
\end{align}
holds. Therefore, we get
\begin{align}
    \Psi(t)\leq \frac{\overline{M}}{\underline{M}}\exp\left(-\frac{\alpha}{2}t\right)\Psi(0)
\end{align}
Now, we apply the norm equivalence argument to the transformations between the target system and the original system by using $u(x,t)=\tilde{u}(x,t)+\hat{u}(x,t)$ and $X(t)=\tilde{X}(t)+\hat{X}(t)$. Let
$\Phi(t)=||u||_{\text{H}_1}^2+|X|^2+||\hat{u}||_{\text{H}_1}^2+|\hat{X}|^2$
so, one can obtain $\overline{N}>0$, and $\underline{N}<0$ such that
\begin{align}
    \underline{N}\Phi(t)\leq \Psi(t)\leq \overline{N}\Phi(t)
\end{align}

Therefore, we get
\begin{align}
    \Phi(t)\leq \frac{\overline{N}}{\underline{N}}\exp\left(-\frac{\alpha}{2}t\right)\Phi(0)
\end{align}
Namely, since the backstepping transformation for the target error system and for the observer system are invertible, the local stability of $(\tilde{w},\tilde{X},\hat{w},\hat{X})$ guarantees the local stability of $(u,X,\hat{u},\hat{X})$, which completes the proof of Theorem \ref{thm:teorem2}.

\section{NUMERICAL SIMULATION}

\label{sec:section5}
Numerical simulation is performed for the plant \eqref{ulin-PDE}-\eqref{ulin-ODE}, and observer \eqref{obulin-PDE}-\eqref{obulin-ODE} with a designed control law \eqref{eqn:controller}. We use the biological constants proposed in \cite{diehl2014one}, as shown in Table \ref{tab:initial}. The initial conditions for the plant is set as $c_0(x) =2c_{\infty}$ for tubulin concentration along the axon, and $l_0=1 \mu m$ for the axon length. In addition, initial conditions for the observer are chosen $c_{\rm o}(x,0)=0$ for all along the axon where the subscript $\rm o$ denotes the observer. The control and observer gains for ODE parts of the closed-loop system are
$k_1=-0.1,\quad k_2=10^{13},\quad l_1=1$, and $l_2=0.1$. The gain parameter, $\lambda$, is chosen $0.05$ to obtain fast convergence.

\begin{figure}[!t]
    \centering
    \subfloat[{The axon length converges to the desired length. }\label{subfig-1}]{%
\includegraphics[width=0.95\linewidth]{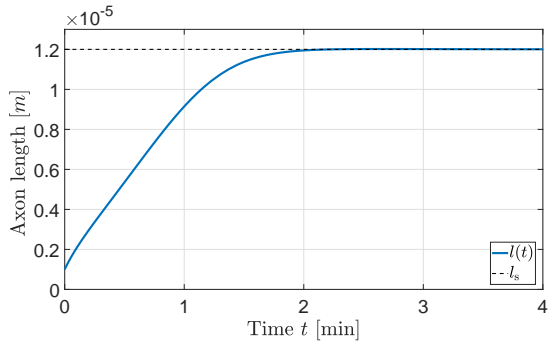}}
    \vfill
\subfloat[{The estimated tubulin concentration, $c_{\rm o}(x,t)=\hat{u}(x,t)+c_{\rm eq}(x)$, generated by the observer in \eqref{obulin-PDE}--\eqref{obulin-ODE} and shown in 
Fig. \ref{blockdiag}, converges all along the axon to the actual, unmeasured concentration $c(x,t)$ by about $t=0.75$ min. Thereafter, both $c_{\rm o}$ and $c$ converge together to the steady-state $c_{\rm eq}(x)$ from $t=0.75$ min till about $t=2$ min. Note that $c_{\rm o}$ converges to $c$ faster than $c$ converges to $c_{\rm eq}$ and faster than $l$ converges to $l_{\rm s}$}\label{subfig-2}]{ \includegraphics[width=0.95\linewidth]{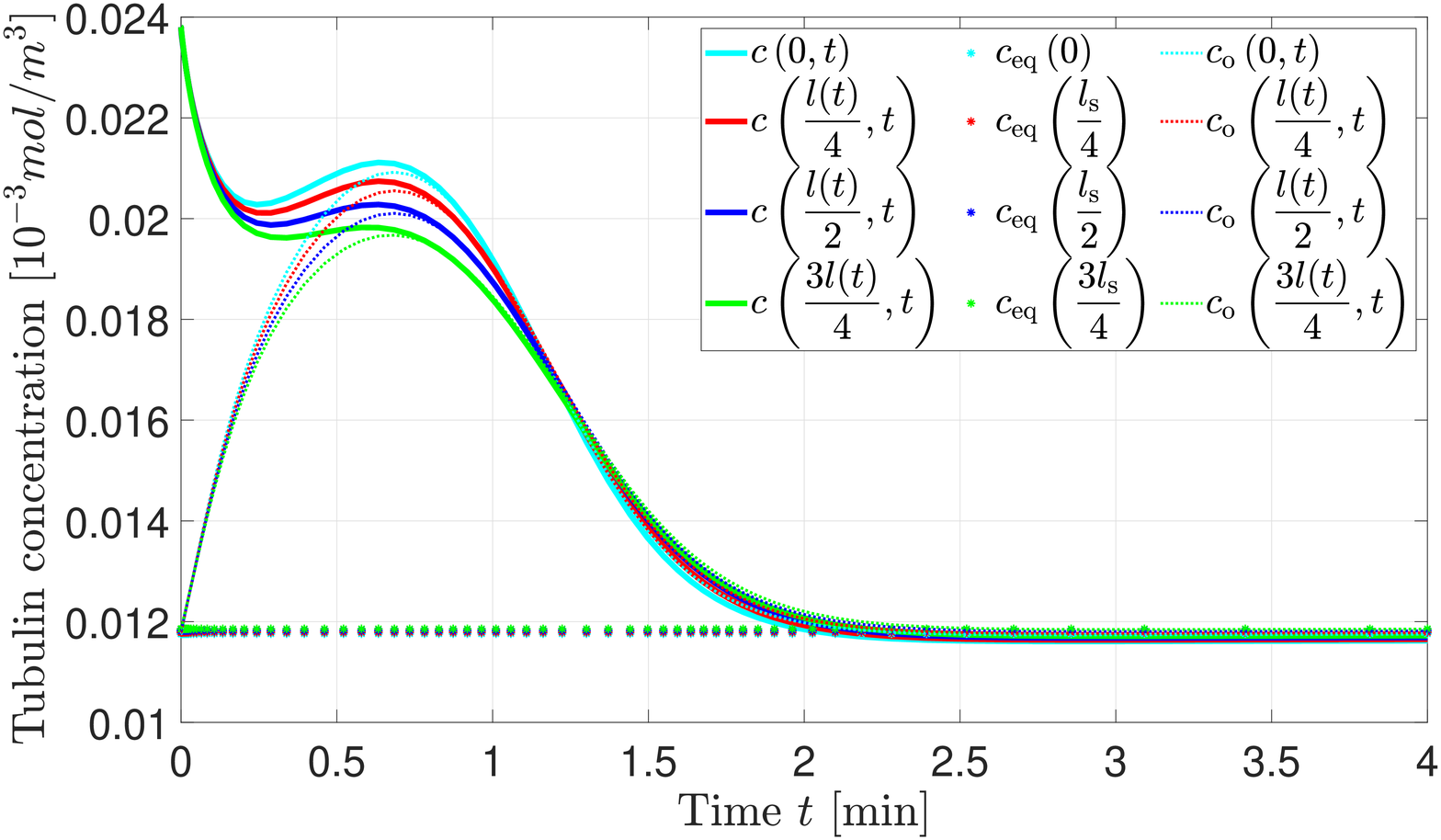}
}
    \caption{Closed-loop  output-feedback response. }
\end{figure}
Fig. \ref{subfig-1} shows that the axon length converges to the desired axon length $l_{\rm s}$. Fig. \ref{subfig-2} illustrates that the observer states converge to the actual tubulin concentration, by which the estimation of the tubulin concentration along the axon is successfully achieved. In addition, the tubulin concentration along the axon converges to the steady-state solution, which demonstrates the effectiveness of the proposed output-feedback control law.

\begin{table}[t]
\hfill
	\caption{\label{tab:initial}Biological constants and control parameters}
	\centering
	\begin{tabular}{cccc} 
		\hline
		Parameter  & Value  & Parameter & Value \\
		\hline
		$D$ & $10\times10^{-6}  m^2/s$& $\tilde{r}_{\rm g}$ & $0.053$\\
		$a$& $1\times 10^{-8}  m/s$ & $\gamma$ &  $10^4$\\
		$g$& $5\times 10^{-7} \ s^{-1}$ & $l_{\rm c}$ & $4\mu m$\\
		$r_{\rm g}$& $1.783\times 10^{-5} \ m^4/(mol s)$ & $l_s$ & $12\mu m$ \\
		$c_{\infty}$ &  $0.0119  \ mol/m^3$ & $l_0$& $1\mu m$ \\
 \hline
	\end{tabular}
\end{table} 

\section{CONCLUSIONS}
This study proposes a novel output-feedback control for a coupled PDE-ODE dynamics with a moving boundary for neuron growth model by applying the PDE backstepping technique. We proposed a PDE-observer with designing an observer gain via backstepping, with showing the stability analysis to estimate the unknown states in the plant dynamics. Then, we designed the output-feedback controller for the axon elongation problem to achieve the desired length of the axon. Next, the stability analysis of the closed-loop system under the proposed output-feedback controller is provided. Finally, we have verified our theoretical results in the simulation results using the biological parameters.

For further research, it is also feasible to establish the local stability without applying linearization with the output-feedback control law. Further studies about the relation between ECM and tubulin production would make it the controller applicable in clinical trials such as ChABC.
\label{sec:section6}

\bibliographystyle{IEEEtranS}
\bibliography{BIB_CDC21.bib}
\end{document}